\documentclass[10pt,a4paper]{article}
\usepackage{fullpage}
\usepackage{amsfonts,amsmath,amssymb}
\usepackage{amsthm}
\usepackage{graphicx}\usepackage{sectsty}
\usepackage{authblk}

\theoremstyle{plain}
\newtheorem{theorem}{Theorem}[section]
\newtheorem{lemma}[theorem]{Lemma}

\theoremstyle{definition}
\newtheorem{definition}[theorem]{Definition}
\theoremstyle{remark}

\numberwithin{equation}{section}

\sectionfont{\large}
\newenvironment{data availability}[1][Data Availability
]{\begin{trivlist} \item[\hskip \labelsep {\bfseries
#1}]}{\end{trivlist}}
\makeatletter
\newcommand{\opnorm}{\@ifstar\@opnorms\@opnorm}
\newcommand{\@opnorms}[1]{%
  \left|\mkern-1.5mu\left|\mkern-1.5mu\left|
   #1
  \right|\mkern-1.5mu\right|\mkern-1.5mu\right|
}
\newcommand{\@opnorm}[2][]{%
  \mathopen{#1|\mkern-1.5mu#1|\mkern-1.5mu#1|}
  #2
  \mathclose{#1|\mkern-1.5mu#1|\mkern-1.5mu#1|}
}
\makeatother
\begin{document}
\title{Wave Support Theorem \\and Inverse Resonant Uniqueness on the Line}
\author{Lung-Hui Chen$^1$}\maketitle\footnotetext[1]{General Education Center, Ming Chi University of Technology, New Taipei City, 24301, Taiwan. Email:
mr.lunghuichen@gmail.com.}\maketitle
\begin{abstract}
In the paper, we experimentally study the inverse problem with the resonant scattering determinant. We analyze the structure of characteristics of perturbed linear waves. Assuming there is the common part of potential perturbation propagating along the same strips, we estimate the common part of the perturbed wave, and its Fourier transform.
 We deduce the partial inverse uniqueness from the Nevanlinna type of representation theorem.
\\MSC: 34B24/35P25/35R30.
\\Keywords:  scattering; resonance;  Schr\"{o}dinger equation; inverse problem; Nevanlinna theorem.
\end{abstract}
\section{Introduction}
Let us consider the Schr\"{o}dinger equation
\begin{equation}\label{1.1}
-\frac{d^{2}}{dx^{2}}+V(x),\,x\in\mathbb{R},\,V(x)\in L_{comp}^{1}([a,b]),\,a<0<b,\,|a|\ll|b|,
\end{equation}
where we assume the potential is effectively support on $[a,b]$, that is, $[a,b]$ is the minimal convex hull that contains the support of $V^{j}$.
The scattering matrix is of the form
\begin{equation}\label{S}
S(k)=\left(\begin{array}{cc}\frac{ik}{\hat{X}(k)} & \frac{\hat{Y}(k)}{\hat{X}(k)} \vspace{7pt}\\\frac{\hat{Y}(-k)}{\hat{X}(k)} & \frac{ik}{\hat{X}(k)}\end{array}\right).
\end{equation}
The scattering matrix $S(k)$ is meromorphic in $\mathbb{C}$, and its poles in $\{\Im k>0\}$ are the square roots of $L^{2}$-eigenvalues of~(\ref{1.1}). In this paper, we understand $\hat{X}(k)$ through the one-dimensional wave equation
\begin{eqnarray} \label{AA}
\left\{%
\begin{array}{ll}
\big(D_{x}^{2}-D_{y}^{2}+V(x)\big)A_{\pm}(x,y)=0;\vspace{5pt}\\
A_{\pm}(x,y)=\delta(x-y),\,\pm x\gg0,
\end{array}%
\right.
\end{eqnarray}
where $D_{y}A_{-}(x,y)=X(y-x)+Y(y+x)$. 
In particular \cite[p.\,727]{Mellin}, 
\begin{eqnarray}\label{1122}
&&X(x)-\delta'(x)+\frac{\int V(t)dt}{2}\delta(x)\in L^{1}(\mathbb{R})\cap L^{\infty}(\mathbb{R});\\
&&Y(y)-\frac{V(y/2)}{4}\in L^{1}(\mathbb{R})\cap L^{\infty}(\mathbb{R}).\label{511}
\end{eqnarray}
Thus, $A_{\pm}(x,y)$ satisfies the wave equation with $x$ taking the place of time (this choice is dictated by the forcing condition imposed) in~(\ref{AA}). The uniqueness part follows from the energy estimates of the wave equation \cite{Dya,Mellin,Tang,Zworski2}. 

\par
In this paper, we consider the complex analysis of entire function $\hat{X}(k)$ and $\hat{Y}(k)$  which are represented in form of
\begin{eqnarray}\label{1.6}
&&\hat{X}(k)=\int_{-2(b-a)}^{0}X(x)e^{-ikx}dx;\\\label{1.7}
&&\hat{Y}(k)=\int_{2a}^{2b}Y(y)e^{-iky}dy,\,k\in\mathbb{C},
\end{eqnarray}
that the unitary identity holds in $\mathbb{C}$:
\begin{equation}\label{U}
\hat{X}(k)\hat{X}(-k) = k^{2} + \hat{Y}(k)\hat{Y}(-k) ,
\end{equation}
\par
More importantly, we consider experimentally 
\begin{equation}\label{SS}
\det S(k):=\frac{-\hat{X}(-k)}{\hat{X}(k)}.
\end{equation}
as scattering data in this paper inspired its simpler analytic structure. 
\par
We define for potential $V^{j}$, $j=1,2$,\,$0<r\ll1$,
\begin{eqnarray}\label{1.10}
&&\hat{X}^{j}_{1}(k):=\mathcal{F}\{X^{j}\chi_{[2a,0]}(y-b)\}(k);\\\label{1.11}
&&\hat{X}^{j}_{2}(k):=\mathcal{F}\{X^{j}\chi_{[2a-2r,2a]}(y-b)\}(k);\\\label{1.12}
&&\hat{X}^{j}_{3}(k):=\mathcal{F}\{X^{j}\chi_{[2a-2b,2a-2r]}(y-b)\}(k);\\
&&\hat{Y}^{j}_{1}(k):=\mathcal{F}\{X^{j}\chi_{[2a,0]}(y+b)\}(k);\\
&&\hat{Y}^{j}_{2}(k):=\mathcal{F}\{X^{j}\chi_{[0,2r]}(y+b)\}(k);\\
&&\hat{Y}^{j}_{3}(k):=\mathcal{F}\{X^{j}\chi_{[2r,2b]}(y+b)\}(k),
\end{eqnarray}
in which $\chi_{[2a,0]}(x)$ is the characteristic function defined on $[2a,0]$, and so on. The support of linear waves $D_{y}A^{j}_{-}(x,y)$ is illustrated in the shaded areas in Figure \ref{F}. Most importantly, $X^{j}\chi_{[2a-2r,2a]}(y-b)$ in not in the domain of influence of $V^{j}\chi[a,0]$ when time variable $x= b$. The function $D_{y}A^{j}_{-}(x,y)=X(y-x)+Y(y+x)$ satisfies the wave equation for $x\geq b$. For $x\leq a$, we have $D_{y}A^{j}_{-}(x,y)=-\delta'(x-y)$. If $b=0$, we firstly see the support of $X^{j}\chi_{[2a,0]}(y)$. For $b\gg0$, we find the red triangle in the center of diagram that shows the support of the wave solution that is not affected by the $V^{j}\chi[a,0]$, and simply depends on $V^{j}\chi[0,b]$. We refer more detailed analysis construction and characteristics analysis to Figure \ref{F}.
\begin{figure}\begin{center}\includegraphics[scale=0.23]{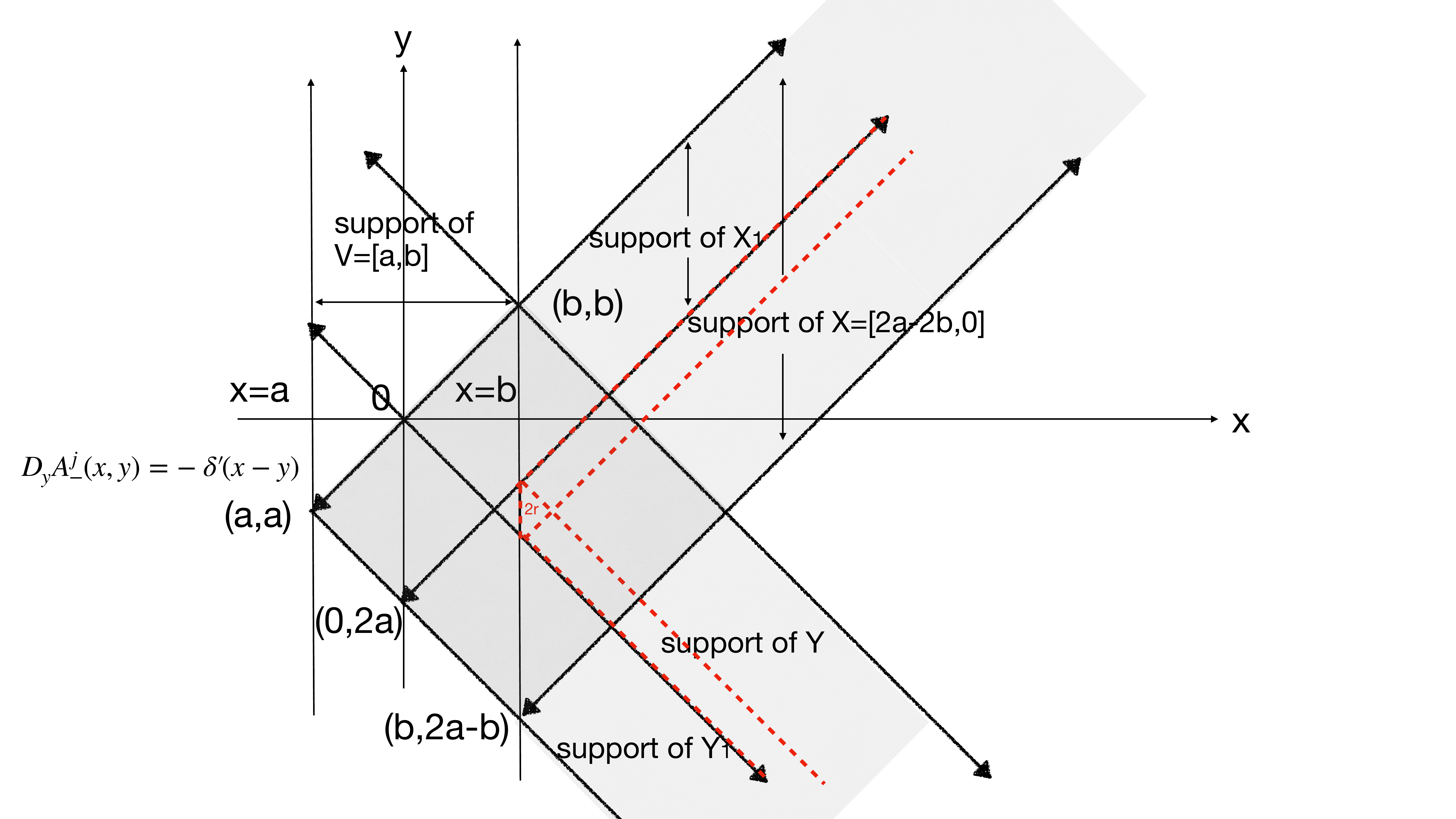}\caption{characteristics of linear waves}\label{F}\end{center}\end{figure}
\par
Let 
$$\det S^{j}(k):=\frac{-\hat{X}^{j}(-k)}{\hat{X}^{j}(k)},\,j=1,2,
$$
be the corresponding scattering determinant of of potential $V^{j}$.
\begin{theorem}\label{11}
If $V^{1}(x)\equiv V^{2}(x)$ on non-empty $[0,b]\subset[a,b]$, and $\det S^{1}(k)=\det S^{2}(k)$, then $V^{1}(x)\equiv V^{2}(x)$ on $[a,b]$.
\end{theorem}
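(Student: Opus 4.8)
The plan is to exploit the fact that the shared data $\det S^1(k)=\det S^2(k)$ is equivalent to $\hat X^1(k)/\hat X^1(-k)=\hat X^2(k)/\hat X^2(-k)$ on $\mathbb C$, hence the ratio $\hat X^1(k)/\hat X^2(k)$ is an even entire function; call it $E(k)$. First I would record the support/exponential-type bookkeeping: since $X^j$ is supported in $[-2(b-a),0]$ with the boundary-layer decomposition (\ref{1.10})--(\ref{1.12}), the hypothesis $V^1\equiv V^2$ on $[0,b]$ forces $\hat X^1_1=\hat X^2_1$ and $\hat Y^1_1=\hat Y^2_1$ (the part of the wave $D_yA_-$ supported on the red triangle in Figure \ref{F} depends only on $V^j\chi_{[0,b]}$, by the domain-of-dependence/finite-propagation-speed property of the wave equation (\ref{AA})). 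This pins down the $e^{-ikx}$-behaviour of $\hat X^1-\hat X^2$: the difference has its Fourier support in $[-2(b-a),2a]$, i.e. it is $O(e^{2(b-a)|\Im k|})$ on one side but gains a factor $e^{-2|a|\cdot|\Im k|}$-type improvement on the other, reflecting that only the "old" part of the potential near $[a,0]$ can still differ.

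Next I would analyze $E(k)=\hat X^1(k)/\hat X^2(k)$. Both $\hat X^j$ are entire of exponential type $2(b-a)$, with indicator diagram the segment $[-2(b-a),0]$ on the imaginary axis, and both are nonvanishing "enough" (their zeros are the resonances) that the quotient's growth can be controlled via a Nevanlinna/Cartwright-class argument: $E$ is even, entire, and of exponential type $0$ once the matching of the $[0,b]$-parts is used, because the common leading exponential factor $e^{2(b-a)\cdot(\cdot)}$ cancels and what remains is governed by the Fourier support of the difference being squeezed. The key analytic input is the Nevanlinna-type representation theorem alluded to in the abstract: an even entire function of minimal exponential type whose restriction to $\mathbb R$ is suitably bounded (here, bounded because $|\det S^j|=1$ on $\mathbb R$, so $|\hat X^1(k)|=|\hat X^2(k)|$ for real $k$, giving $|E(k)|=1$ on $\mathbb R$) must be constant. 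Hence $\hat X^1\equiv c\,\hat X^2$, and normalization at $k=0$ via (\ref{1122}) (the $\delta'$ and $\int V$ terms) forces $c=1$, so $\hat X^1\equiv\hat X^2$.

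From $\hat X^1\equiv\hat X^2$ I would invoke the unitarity identity (\ref{U}): $\hat X^j(k)\hat X^j(-k)-k^2=\hat Y^j(k)\hat Y^j(-k)$, so $\hat Y^1(k)\hat Y^1(-k)\equiv\hat Y^2(k)\hat Y^2(-k)$. Combined with the already-established matching $\hat Y^1_1\equiv\hat Y^2_1$ on the shared support, and the factorization of $\hat Y^j$ according to (\ref{1.13})--(\ref{1.15}) into the $[0,b]$-part and the $[a,0]$-part, a Titchmarsh-convolution / support argument separates the factors and yields $\hat Y^1\equiv\hat Y^2$ outright; then (\ref{511}), $\hat Y^j(k)=\mathcal F\{Y^j\}(k)$ with $Y^j(y)-V^j(y/2)/4\in L^1\cap L^\infty$, inverts to give $V^1\equiv V^2$ on all of $[a,b]$ by Fourier uniqueness. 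The main obstacle I anticipate is the second paragraph: rigorously justifying that the quotient $E$ is of \emph{minimal} type (not merely bounded type) and that it lies in the precise Nevanlinna/Cartwright class to which the representation theorem applies — this requires careful handling of the resonance zeros of $\hat X^2$ in the lower half-plane (so that $1/\hat X^2$ does not blow up the type) and a clean statement that the $[0,b]$-matching really does cancel the dominant exponential, rather than merely the leading coefficient. Everything else — the finite-speed-of-propagation matching on the triangle, the unitarity bootstrap, and the final Fourier inversion — is routine once that analytic core is in place.
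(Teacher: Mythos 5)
Your plan hinges on the claim that $\det S^{1}=\det S^{2}$ gives $|\hat{X}^{1}(k)|=|\hat{X}^{2}(k)|$ on $\mathbb{R}$, hence $|E(k)|=1$ there for $E=\hat{X}^{1}/\hat{X}^{2}$; this is false, and it is the step on which your whole analytic core rests. Since $X^{j}$ is real, $\hat{X}^{j}(-k)=\overline{\hat{X}^{j}(k)}$ for real $k$, so each $\det S^{j}$ is automatically unimodular on $\mathbb{R}$; the hypothesis $\hat{X}^{1}(-k)/\hat{X}^{1}(k)\equiv\hat{X}^{2}(-k)/\hat{X}^{2}(k)$ therefore only equates the \emph{phases} of $\hat{X}^{1}$ and $\hat{X}^{2}$ on the real axis (equivalently, it makes $E$ real on $\mathbb{R}$), and carries no information about moduli. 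Without the bound $|E|=1$ on $\mathbb{R}$ the Liouville-type conclusion ``minimal type $+$ bounded on $\mathbb{R}$ $\Rightarrow$ constant'' has nothing to act on. The other ingredients of that paragraph are also unsubstantiated: $E$ need not be entire (the finitely many zeros of $\hat{X}^{2}$ in $\mathbb{C}^{+}$ need not be zeros of $\hat{X}^{1}$), and matching the part of the wave determined by $V\chi_{[0,b]}$ does \emph{not} make the quotient of minimal type — what it gives is that the difference (the paper's $G$ in (\ref{3.3})) has indicator at most $|b-r||\sin\theta|$, strictly below $(b-a)|\sin\theta|$, and a difference of smaller type says nothing about $G/\hat{X}^{2}$ near the lower-half-plane zeros of $\hat{X}^{2}$. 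This is not the ``technical obstacle'' you flagged; the route fails for lack of modulus data. The paper's argument is structured precisely to avoid this: equality of the determinants is used only through the \emph{zeros} (by Lemma \ref{24} the two functions share all but finitely many zeros, of density $(b-a)/\pi$ by Cartwright, Theorem \ref{C}), while $G$ can have zero density at most $(b-r)/\pi$, forcing $G\equiv0$ and hence $\hat{X}^{1}\equiv\hat{X}^{2}$; modulus information enters only afterwards, for $\hat{Y}$, via the unitarity identity (\ref{U}) and the Nevanlinna--Levin representation (Theorem \ref{NL}), again finished by a zero-density count.

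Two further points. First, the piece pinned down by $V^{1}\equiv V^{2}$ on $[0,b]$ is the ``2'' component (Lemma \ref{id}: $\hat{X}^{j}_{2}$, $\hat{Y}^{j}_{2}$, the part outside the domain of influence of $V\chi_{[a,0]}$ — the red triangle), not the ``1'' component as you state; the bookkeeping of which exponential is removed matters for the type/density comparison. Second, your closing step is not valid as written: from $\hat{Y}^{1}\equiv\hat{Y}^{2}$ one cannot conclude $V^{1}\equiv V^{2}$ by Fourier uniqueness through (\ref{511}), because $Y^{j}-V^{j}(\cdot/2)/4$ is a $V^{j}$-dependent remainder, not a common function. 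The paper instead assembles $\hat{X}^{1}\equiv\hat{X}^{2}$ and $\hat{Y}^{1}\equiv\hat{Y}^{2}$ into $S^{1}\equiv S^{2}$ and invokes Zworski's result that the scattering matrix determines a compactly supported potential.
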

In literature, when there are no bound states, the potential is determined by the reflection coefficients by Faddeev's theory \cite{DT79,Fa64}. 
In inverse resonance problem, we consider to determine the potential $V$ from the resonances of~(\ref{1.1}) which includes the square root of $L^{2}$-eigenvalues. The inverse resonance problem of Schr\"{o}dinger operator on the half line has been studied in \cite{Korotyaev1,Korotyaev3,Zworski2}. In the half line case, the unique recovery of the potential from the eigenvalues and resonances is justified in \cite{Korotyaev1}. However, in the full line case, the inverse resonance problems mainly remained open for a long time. It is known that the potential cannot be solely determined by the eigenvalues and resonances. Specifically, Zworski \cite{Zworski2} proved the uniqueness theorem for the symmetric potentials along with certain isopolar results. Furthermore, Korotyaev \cite{Korotyaev1,Korotyaev3} applied the value distribution theory in complex analysis to prove that all eigenvalues and resonances, and a signed sequence can uniquely determine the potential $V$.

\section{Lemmata}
\begin{lemma}\label{21}
If $F(x)$ is of bounded variation on $(-\infty,\infty)$, then $F(x)$ is constant except on some finite interval if and only if $f(z)=\int_{-\infty}^{\infty}e^{izt}dF(t)$ is an entire functional of exponential type; and if $(a,b)$ is the smallest interval outside which $F(x)$ is a constant, then $a=h_{f}(-\frac{\pi}{2})$ and $b=h_{f}(\frac{\pi}{2})$.
\end{lemma}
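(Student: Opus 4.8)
The plan is to read Lemma~\ref{21} as the Paley--Wiener theorem for Stieltjes measures, sharpened on the boundary by P\'olya's description of the indicator diagram. I would prove the two implications separately and close the argument with the minimality of $(a,b)$.

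\emph{The easy implication.} Suppose $F$ is constant outside $(a,b)$. Then $dF$ is a finite complex Borel measure carried by $[a,b]$, with total variation $V=\operatorname{Var}(F)$, so that $f(z)=\int_{a}^{b}e^{izt}\,dF(t)$. For each $t$ the map $z\mapsto e^{izt}$ is entire and, on every disc $\{|z|\le R\}$, bounded uniformly in $t\in[a,b]$; dominated convergence then shows $f$ is entire and may be differentiated under the integral sign. Writing $z=re^{i\theta}$ and using $|e^{izt}|=e^{-rt\sin\theta}$ gives
\[
|f(re^{i\theta})|\le V\exp\!\Big(r\max_{a\le t\le b}\bigl(-t\sin\theta\bigr)\Big),
\]
so $f$ has exponential type at most $\max(|a|,|b|)$, and specialising $\theta=\pm\pi/2$ bounds $h_{f}(\mp\pi/2)$ by the respective endpoint of $[a,b]$. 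This half is routine, and it already supplies one of the two inequalities needed for the endpoint identities.

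\emph{The substantive implication.} Conversely, let $f(z)=\int_{-\infty}^{\infty}e^{izt}\,dF(t)$ be entire of exponential type. Since $dF$ is a finite measure, $f$ is bounded on $\mathbb{R}$ by $V$. Boundedness on the real axis together with finite exponential type lets a Phragm\'en--Lindel\"of argument in each of the half-planes $\pm\Im z>0$ upgrade the ray-wise exponents $h_{f}(\pm\pi/2)$ to a global estimate $|f(z)|\le V\exp\!\bigl(\Phi(\Im z)\bigr)$, where $\Phi$ is the piecewise-linear majorant determined by $h_{f}(\pm\pi/2)$ --- equivalently, by P\'olya's theorem, the Borel transform $w\mapsto\int(w-it)^{-1}\,dF(t)$ of $f$ is holomorphic off the vertical segment joining the heights $h_{f}(-\pi/2)$ and $h_{f}(\pi/2)$. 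Feeding this estimate into the convex-support refinement of the Paley--Wiener--Schwartz theorem, the tempered distribution $dF$ is carried by the closed interval with those endpoints; in particular $F$ is constant outside a finite interval, and since $(a,b)$ is the smallest such interval, comparing with the inequality from the easy implication forces $a=h_{f}(-\pi/2)$ and $b=h_{f}(\pi/2)$. (If one prefers to avoid Paley--Wiener--Schwartz, the same conclusion follows by pairing $dF$ with a test function $\varphi\in C^{\infty}_{c}$ supported strictly beyond the expected right endpoint, writing $\int\varphi\,dF$ as a Parseval-type integral against $f$ on $\mathbb{R}$, and shifting the contour into $\Im z<0$: the decay of $\widehat\varphi$ there outpaces the growth of $f$, so letting the shift tend to infinity annihilates the integral, whence $F$ is constant to the right of $h_{f}(\pi/2)$; the mirror argument treats the left endpoint.)

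The main obstacle is the substantive implication, and within it the Phragm\'en--Lindel\"of/P\'olya step: one must upgrade control of $f$ along the two imaginary rays to a uniform bound on entire half-planes before either the sharp Paley--Wiener--Schwartz theorem or the contour shift can be applied. Everything else --- the differentiation-under-the-integral estimate, the reduction to a finite interval, and the bookkeeping that matches the endpoints of the minimal interval with $h_{f}(\mp\pi/2)$ --- is routine, so the genuinely analytic content of the lemma is precisely the passage from ``indicator along a ray'' to ``growth on a half-plane''.
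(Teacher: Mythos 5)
Your outline is essentially correct, but note first that the paper does not prove this lemma at all: its ``proof'' is a one-line citation to Boas, \emph{Entire Functions}, p.~109, where the statement is the classical Paley--Wiener-type theorem for Fourier--Stieltjes transforms. What you have written is therefore a reconstruction of the textbook argument rather than an alternative to anything in the paper. Your easy direction (differentiation under the integral plus the bound $|f(re^{i\theta})|\le V\exp\bigl(r\max_{a\le t\le b}(-t\sin\theta)\bigr)$) is the standard one. For the converse, both of your routes are viable and classical: boundedness of $f$ on $\mathbb{R}$ by the total variation $V$ together with finite exponential type gives, by Phragm\'en--Lindel\"of in each half-plane, the two-sided bound $|f(x+iy)|\le Ve^{h_f(\pi/2)\,y}$ for $y\ge 0$ and $|f(x+iy)|\le Ve^{h_f(-\pi/2)\,|y|}$ for $y\le 0$; from there one may either invoke the sharp (convex-support) Paley--Wiener--Schwartz theorem, or run your Parseval/contour-shift argument against $C_c^\infty$ test functions, which avoids distribution theory and is closer to the classical proofs; Boas himself reaches the same conclusion via P\'olya's theorem applied to the Borel transform $\int(w-it)^{-1}dF(t)$, the third formulation you mention. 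So the mathematical content matches the source the paper outsources to; your write-up simply makes the machinery explicit.

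One piece of bookkeeping must be fixed before the last step is honest. With $f(z)=\int e^{izt}\,dF(t)$ and the indicator of Definition 4.2, your own estimate gives $h_f(\pi/2)\le -a$ and $h_f(-\pi/2)\le b$, and the converse direction yields $\operatorname{supp}dF\subset[-h_f(\pi/2),\,h_f(-\pi/2)]$; comparing the two therefore forces $a=-h_f(\pi/2)$ and $b=h_f(-\pi/2)$, not the identities $a=h_f(-\pi/2)$, $b=h_f(\pi/2)$ as printed (test with $dF=\delta_b$, $f(z)=e^{izb}$, $h_f(\theta)=-b\sin\theta$). The discrepancy originates in the lemma's transcription of Boas rather than in your method, but your closing sentence ``comparing with the inequality from the easy implication forces $a=h_f(-\pi/2)$ and $b=h_f(\pi/2)$'' glosses over it; state the endpoint identification with the signs your estimates actually deliver, or change the kernel to $e^{-izt}$ (equivalently conjugate the indicator), so that the final matching of endpoints follows from what you proved.
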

\begin{proof}
We refer the proof to Boas \cite[p.\,109]{Boas}.
\end{proof}
Here, we note that the indicator diagram of $\hat{X}^{j}(k)$ is the line set $i[-2(b-a),0]$ on the imaginary axis, and its length is $|a|+|b|$.
\begin{lemma}
The length of indicator diagram of $\hat{X}^{j}(k)$ is $2(b-a)$.
\end{lemma}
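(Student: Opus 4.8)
The plan is to combine Lemma \ref{21} with the exponential--type representation (\ref{1.6}) of $\hat{X}^{j}$, after first stripping off the two purely distributional terms in $X^{j}$, and then to pin down the indicator diagram by an elementary growth estimate on the real axis.

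Set $c_{j}:=\int_{a}^{b}V^{j}(t)\,dt$. By (\ref{1122}) we may write $X^{j}(x)=\delta'(x)-\tfrac{c_{j}}{2}\delta(x)+g_{j}(x)$ with $g_{j}\in L^{1}(\mathbb{R})\cap L^{\infty}(\mathbb{R})$, all three terms supported in $[-2(b-a),0]$; correspondingly
\[
\hat{X}^{j}(k)=ik-\tfrac{c_{j}}{2}+\hat g_{j}(k),\qquad \hat g_{j}(k)=\int_{-2(b-a)}^{0}g_{j}(x)e^{-ikx}\,dx .
\]
The term $ik-\tfrac{c_{j}}{2}$ is a polynomial, hence of minimal exponential type, and therefore does not affect the indicator function: $h_{\hat{X}^{j}}\equiv h_{\hat g_{j}}$. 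Thus it suffices to determine the indicator diagram of the entire function $\hat g_{j}$, which is a genuine Fourier transform of the $L^{1}$-function $g_{j}$.

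Let $G_{j}(x):=\int_{-\infty}^{x}g_{j}(s)\,ds$. Since $g_{j}\in L^{1}(\mathbb{R})$ is supported in $[-2(b-a),0]$, the function $G_{j}$ is of bounded variation on $\mathbb{R}$, constant on $(-\infty,-2(b-a))$ and on $(0,\infty)$, and $\hat g_{j}(k)=\int e^{-ikx}\,dG_{j}(x)$. After the reflection $x=-t$ this becomes $\int e^{ikt}\,d\big(-G_{j}(-t)\big)$, which is precisely the object $\int_{-\infty}^{\infty}e^{izt}\,dF(t)$ of Lemma \ref{21} with $z=k$ and $F(t)=-G_{j}(-t)$, a function of bounded variation that is constant outside $[0,2(b-a)]$. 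Here the hypothesis that $[a,b]$ is the \emph{minimal} convex hull of $\operatorname{supp}V^{j}$ enters decisively: through finite speed of propagation for the wave equation (\ref{AA}) -- the support structure recorded in (\ref{1122}) and depicted in Figure \ref{F} (see \cite{Mellin}) -- it guarantees that $g_{j}$ does not vanish identically near either endpoint $-2(b-a)$ or $0$, so that $[0,2(b-a)]$ is genuinely the smallest interval outside which $F$ is constant. Lemma \ref{21} then identifies the two values $h_{\hat g_{j}}(\tfrac{\pi}{2})$ and $h_{\hat g_{j}}(-\tfrac{\pi}{2})$ with the endpoints of that interval, i.e. with $0$ and $2(b-a)$.

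Finally, for $k\in\mathbb{R}$ the representation above gives $|\hat{X}^{j}(k)|\le|k|+\tfrac{|c_{j}|}{2}+\|g_{j}\|_{L^{1}}$, so $h_{\hat{X}^{j}}(0)=h_{\hat{X}^{j}}(\pi)=0$. Describing the (closed, convex) indicator diagram $D$ of $\hat{X}^{j}$ through its support function, the value $0$ in the directions $\theta=0,\pi$ forces $D$ to lie on the imaginary axis, while the values $0$ and $2(b-a)$ in the directions $\theta=\pm\tfrac{\pi}{2}$ force the imaginary parts of the points of $D$ to fill exactly $[-2(b-a),0]$; hence $D=i[-2(b-a),0]$, in agreement with the Remark, and its length is $2(b-a)$. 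The one non-formal step -- and the only place the minimality of $[a,b]$ is used -- is the non-vanishing of $g_{j}$ near the endpoints claimed in the previous paragraph; I expect that to be the main obstacle, while everything else is bookkeeping with Lemma \ref{21} and with the definition of the indicator diagram.
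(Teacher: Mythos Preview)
The paper's own proof is the single sentence ``We use Lemma~\ref{21} and~(\ref{1.6})''; your route is far more careful (in particular it actually confronts the $\delta'$ in $X^{j}$, which prevents $X^{j}$ from being of bounded variation), and is in the right spirit, but one step is wrong. The assertion that ``$ik-\tfrac{c_{j}}{2}$ is of minimal exponential type and therefore does not affect the indicator function: $h_{\hat{X}^{j}}\equiv h_{\hat g_{j}}$'' is false as a general principle: a minimal-type perturbation can raise the indicator wherever it was nonpositive (for instance $h_{e^{z}}(\pi)=-1$ but $h_{e^{z}+1}(\pi)=0$). In your situation this bites exactly at $\theta=\tfrac{\pi}{2}$, where $h_{\hat g_{j}}(\tfrac{\pi}{2})\le 0$ and $h_{p}(\tfrac{\pi}{2})=0$, so neither clause of Lemma~\ref{36} yields equality; indeed if $g_{j}$ vanished near $x=0$ one would have $h_{\hat g_{j}}(\tfrac{\pi}{2})<0$ while still $h_{\hat X^{j}}(\tfrac{\pi}{2})=0$, and the claimed identity would fail outright.

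The repair is short and also eliminates what you flagged as the ``main obstacle''. At $\theta=-\tfrac{\pi}{2}$ one has $h_{\hat g_{j}}(-\tfrac{\pi}{2})=2(b-a)>0=h_{p}(-\tfrac{\pi}{2})$, so the equality case of Lemma~\ref{36} gives $h_{\hat X^{j}}(-\tfrac{\pi}{2})=2(b-a)$; here the non-vanishing of $g_{j}$ near $-2(b-a)$ (coming from the minimality of $[a,b]$) is genuinely needed. At $\theta=\tfrac{\pi}{2}$ argue directly from the decomposition: $\hat g_{j}(ir)=\int_{-2(b-a)}^{0}g_{j}(x)e^{rx}\,dx$ is bounded as $r\to\infty$, hence $\hat X^{j}(ir)=-r-\tfrac{c_{j}}{2}+O(1)$ and $h_{\hat X^{j}}(\tfrac{\pi}{2})=0$. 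No hypothesis on $g_{j}$ near $x=0$ is required, so the non-vanishing there is irrelevant, and the width $h_{\hat X^{j}}(\tfrac{\pi}{2})+h_{\hat X^{j}}(-\tfrac{\pi}{2})=2(b-a)$ follows.
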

\begin{proof}
We use Lemma \ref{21} and~(\ref{1.6}) to conclude the result.
\end{proof}
\begin{lemma}
$\hat{X}^{j}_{1}(k)$ has indicator function $|a||\sin\theta|$;\,$\hat{X}^{j}_{2}(k)$ has indicator function $|r||\sin\theta|$;\,$\hat{X}^{j}_{3}(k)$ has indicator function $|b-r||\sin\theta|$.
\end{lemma}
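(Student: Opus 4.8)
\emph{Proof strategy.} The plan is to treat each $\hat X^{j}_{\ell}(k)$, $\ell=1,2,3$, as a Fourier--Laplace transform of a compactly supported density and to read off its indicator from the location of that density, exactly as the preceding lemma was obtained from Lemma~\ref{21} and \eqref{1.6}, but now applied separately to the three consecutive sub-intervals $[2a,0]$, $[2a-2r,2a]$ and $[2a-2b,2a-2r]$ which partition $[-2(b-a),0]$ in \eqref{1.10}--\eqref{1.12}. By \eqref{1122}, $X^{j}$ is the sum of $\delta'$, of $-\tfrac12\big(\int V^{j}\big)\delta$, both carried by the single point $0$, and of an $L^{1}\cap L^{\infty}$ function whose convex support hull is --- under the standing minimality hypothesis on $V^{j}$ --- the whole interval $[-2(b-a),0]$.

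First I would verify that applying the cut-offs of \eqref{1.10}--\eqref{1.12} to $X^{j}$ and carrying out the translations built into those formulas produces compactly supported distributions carried, respectively, by $[2a,0]$, $[2a-2r,2a]$ and $[2a-2b,2a-2r]$; the singular part sits at the point $0$, hence enters only $\hat X^{j}_{1}$, and there only as the polynomial summand $ik-\tfrac12\int V^{j}$, which alters neither the exponential type nor the indicator in any direction $\theta\neq 0,\pi$ (and in those two directions the claimed indicator vanishes). Thus each $\hat X^{j}_{\ell}$ is entire of exponential type. Next, pulling out the unimodular factor $e^{-ik\alpha_{\ell}}$, with $\alpha_{\ell}$ the midpoint of the $\ell$-th interval, turns $\hat X^{j}_{\ell}$ into the transform of a density carried by the symmetric interval $[-L_{\ell},L_{\ell}]$, where $L_{1}=|a|$, $L_{2}=r$, $L_{3}=b-r$ are the half-lengths. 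Lemma~\ref{21} applied to this recentered transform then yields indicator diagram $i[-L_{\ell},L_{\ell}]$ and indicator function $h_{\hat X^{j}_{\ell}}(\theta)=L_{\ell}|\sin\theta|$, i.e. $|a||\sin\theta|$, $|r||\sin\theta|$ and $|b-r||\sin\theta|$; as a consistency check, $L_{1}+L_{2}+L_{3}=|a|+b$ recovers the indicator of $\hat X^{j}$.

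The delicate step is the sharpness of the lower bound --- that the type equals $L_{\ell}$, not something smaller --- since Lemma~\ref{21} requires $[-L_{\ell},L_{\ell}]$ to be the \emph{smallest} interval outside which the antiderivative of the recentered density is constant, equivalently that the $L^{1}$ part of $X^{j}$ does not vanish near either endpoint of the $\ell$-th interval. At the two outer spectral endpoints $0$ and $-2(b-a)$, which concern $\hat X^{j}_{1}$ and $\hat X^{j}_{3}$, non-degeneracy is built into the minimality of $[a,b]$ as the convex support hull of $V^{j}$, transported to $X^{j}$ through \eqref{1122} and the construction \eqref{AA} together with the near-diagonal behaviour of the transformation kernel recorded in \cite{Mellin}. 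At the interior partition points $2a$ and $2a-2r$ this is not automatic, and I expect it to be the main obstacle: a priori the $L^{1}$ part of $X^{j}$ can be small on the thin slab $[2a-2r,2a]$, so for $\hat X^{j}_{2}$ the asserted identity is, strictly, valid only once that slab is known to meet the convex support hull of $X^{j}$ near $2a$ --- which is precisely where the choice $0<r\ll1$ and the domain-of-influence analysis summarised in Figure~\ref{F} are used, and where the argument must go beyond bare Paley--Wiener. The one-sided bounds $h_{\hat X^{j}_{\ell}}(\theta)\le L_{\ell}|\sin\theta|$, by contrast, follow at once from the support localisation of the second step.
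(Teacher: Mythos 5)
Your proposal follows essentially the same route as the paper: the paper's entire proof of this lemma is the one-line remark that it is ``straightforward'' from Lemma~\ref{21} together with the definitions \eqref{1.10}--\eqref{1.12}, i.e.\ exactly the support/Paley--Wiener reading of each windowed transform that you carry out, giving the half-lengths $|a|$, $r$, $b-r$ after recentering. The further points you raise --- the singular part of $X^{j}$ at the endpoint, the unimodular factor $e^{-ik\alpha_{\ell}}$ needed to make the symmetric form $L_{\ell}|\sin\theta|$ literally correct, and above all the sharpness of the type (that the $L^{1}$ part of $X^{j}$ must not vanish near the endpoints of the subintervals, in particular on the thin slab $[2a-2r,2a]$ relevant to $\hat{X}^{j}_{2}$) --- are genuine refinements that the paper's proof does not address at all, so they mark places where you are more careful than the paper rather than places where you diverge from its approach.
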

\begin{proof}
It is straightforward by Lemma \ref{21},~(\ref{1.10}),~(\ref{1.11}), and~(\ref{1.12}).
\end{proof}
\begin{lemma}\label{24}
$\hat{X}^{j}_{j'}(k)$ has only finite zeros in $\mathbb{C}^{+}$, and infinitely many in $\mathbb{C}^{-}$.
\end{lemma}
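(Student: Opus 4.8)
The plan is to reduce the claim, for each $j$ and $j'$, to a zero-counting problem for the Fourier transform of the piece of $X^{j}$ carried by the corresponding characteristic strip of Figure \ref{F}, and then to read off the two half-planes from the size of $X^{j}$ at the two ends of that strip. Put $I_{1}=[2a,0]$, $I_{2}=[2a-2r,2a]$, $I_{3}=[2a-2b,2a-2r]$; after the substitution $v=y-b$ in (\ref{1.10})--(\ref{1.12}) one obtains
\[
\hat{X}^{j}_{j'}(k)=e^{-ikb}\,\widehat{W}_{j'}(k),\qquad
\widehat{W}_{j'}(k):=\int_{I_{j'}}X^{j}(v)\,e^{-ikv}\,dv,
\]
so, $e^{-ikb}$ being zero-free, it is enough to locate the zeros of $\widehat{W}_{j'}$. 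By Lemma \ref{21} each $\widehat{W}_{j'}$ is entire of exponential type, of \emph{positive} type since $I_{j'}$ is a nondegenerate interval; and by (\ref{1122}) the transformed density is $\delta'-\tfrac12\int V^{j}\cdot\delta$ plus an $L^{1}\cap L^{\infty}$ function for $j'=1$, and a genuine $L^{1}\cap L^{\infty}$ function for $j'=2,3$. Since $I_{j'}\subset(-\infty,0]$, each $\widehat{W}_{j'}$ is at most linearly bounded on $\mathbb{R}$, hence of Cartwright class.

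First I would settle the upper half-plane. For $j'=1$, splitting off the $\delta'$ gives $\widehat{W}_{1}(k)=ik-\tfrac12\int V^{j}+\widehat{L^{j}\chi_{I_{1}}}(k)$ with $L^{j}\in L^{1}\cap L^{\infty}$; as $|e^{-ikv}|\le1$ for $v\le0$ and $\Im k\ge0$, Riemann--Lebesgue and dominated convergence give $\widehat{L^{j}\chi_{I_{1}}}(k)\to0$ when $|k|\to\infty$ in $\overline{\mathbb{C}^{+}}$, so $\widehat{W}_{1}(k)=ik\,(1+o(1))$ there; thus $\widehat{W}_{1}$ has no zero with $|k|\ge R_{0}$ in $\overline{\mathbb{C}^{+}}$ and, being entire, only finitely many zeros in $\overline{\mathbb{C}^{+}}$ altogether. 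For $j'=2,3$ the density is a nonzero $L^{1}\cap L^{\infty}$ function supported strictly left of the origin, so $\widehat{W}_{j'}$ is bounded on $\overline{\mathbb{C}^{+}}$ with boundary values decaying along $\mathbb{R}$, and all of its exponential growth sits in $\mathbb{C}^{-}$; I would then apply the half-plane version of the Lindel\"of--Levinson theorem on zero distribution (as in Boas \cite{Boas}), or argue via Phragm\'en--Lindel\"of after dividing out the finitely many zeros of large imaginary part, to see that again only finitely many zeros lie in $\overline{\mathbb{C}^{+}}$.

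Next, for the lower half-plane, I would observe that $\widehat{W}_{j'}$ is not of the form $P(k)e^{ck}$ with $P$ a polynomial: for $j'=2,3$ it is the Fourier transform of a nonzero $L^{1}$ function, hence tends to $0$ along $\mathbb{R}$ at infinity, whereas $P(k)e^{ck}$ never does; for $j'=1$ it grows like $ik$ on $\mathbb{R}$, and the alternative $\widehat{W}_{1}(k)=ik+d$ would force $\widehat{L^{j}\chi_{I_{1}}}$ to be constant, i.e.\ $L^{j}\chi_{I_{1}}\equiv0$, which is ruled out by the standing nondegeneracy of $V^{j}$ (recall $[a,b]$ is the minimal convex hull of $\operatorname{supp}V^{j}$). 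By Hadamard's factorization an entire function of positive exponential type that is not of that special form has infinitely many zeros; subtracting the finitely many already placed in $\overline{\mathbb{C}^{+}}$ leaves infinitely many in $\mathbb{C}^{-}$. As $\hat{X}^{j}_{j'}$ and $\widehat{W}_{j'}$ share the same zeros, this proves the lemma.

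The step I expect to be the genuine obstacle is the upper-half-plane bound for $\widehat{W}_{2}$ and $\widehat{W}_{3}$: a one-sided Paley--Wiener function of positive exponential type can a priori carry an infinite string of zeros escaping to infinity while clustering toward $\mathbb{R}$ from above. An exponential-sum analysis locates the asymptotic zero string of $\widehat{W}_{j'}$ near the horizontal line $\Im k=|I_{j'}|^{-1}\log|X^{j}(\alpha_{j'})/X^{j}(\beta_{j'})|$, with $\alpha_{j'},\beta_{j'}$ the endpoints of the strip, and one must check that this constant is $\le0$, i.e.\ that $|X^{j}|$ is larger at the end of the strip nearer the singular support $\{v=0\}$. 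Establishing this is exactly where the characteristic geometry of Figure \ref{F} and the indicator estimates of the preceding lemmas must be brought together, perhaps with mild regularity or genericity of $V^{j}$ near the ends of the strips; I regard it as the crux.
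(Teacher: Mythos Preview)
The paper does not give a proof at all: its entire argument is the sentence ``This is well-known in the literature, say, \cite{Dya}.'' The cited fact from scattering theory is that the full transform $\hat{X}^{j}$ has only finitely many zeros in $\mathbb{C}^{+}$ (these being the square roots of $L^{2}$-eigenvalues of~(\ref{1.1})) and infinitely many in $\mathbb{C}^{-}$ (the resonances). That spectral interpretation is what the citation covers, and it is exactly the case $\hat{X}^{j}=\hat{X}^{j}_{1}+\hat{X}^{j}_{2}+\hat{X}^{j}_{3}$ that the paper actually invokes later in the proof of Theorem~\ref{11}. Your argument for $j'=1$ reproduces this by hand via the $ik$ asymptotics coming from the $\delta'$ in~(\ref{1122}), which is a valid alternative.

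Where your proposal differs from the paper is that you try to prove the lemma in the full generality in which it is \emph{stated}, namely also for the pieces $\hat{X}^{j}_{2}$ and $\hat{X}^{j}_{3}$. You correctly flag that this is the hard part, and your worry is well founded: a Fourier transform of a compactly supported $L^{1}\cap L^{\infty}$ density on a negative interval is bounded in $\overline{\mathbb{C}^{+}}$ and of Cartwright class, but Cartwright's theorem (Theorem~\ref{C}) only pins the bulk of the zeros to thin sectors around $\mathbb{R}$; it does not forbid an infinite string just above the axis. Your proposed remedy---an exponential-sum asymptotic locating the zero string at height $|I_{j'}|^{-1}\log|X^{j}(\alpha_{j'})/X^{j}(\beta_{j'})|$---requires $X^{j}$ to have nonzero, well-defined endpoint values on each subinterval, which nothing in~(\ref{1122}) guarantees at the interior cut points $2a$ and $2a-2r$. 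So this route does not close without an extra regularity or nondegeneracy assumption on $X^{j}$ at those points.

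In short: for the case the paper actually uses, your $j'=1$ argument is a correct and more explicit substitute for the bare citation. For $j'=2,3$ you have identified a genuine gap that the paper's one-line proof does not address either; the literature reference \cite{Dya} pertains to $\hat{X}^{j}$ itself, not to its truncations.
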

\begin{proof}
This is well-known in the literature, say, \cite{Dya}.
\end{proof}
\begin{lemma}\label{id}
If $V^{1}\equiv V^{2}$ on $[0,b]$, then $\hat{X}^{1}_{2}(k)\equiv\hat{X}^{1}_{2}(k)$, and $\hat{Y}^{1}_{2}(k)\equiv\hat{Y}^{1}_{2}(k)$.
\end{lemma}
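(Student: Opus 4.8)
The plan is to read off the claim directly from the structural identities~(\ref{1122})--(\ref{511}) together with the finite-speed-of-propagation description of $D_{y}A^{j}_{-}(x,y)=X^{j}(y-x)+Y^{j}(y+x)$. The key point, already emphasized in the discussion surrounding Figure~\ref{F}, is that the pieces $\hat{X}^{j}_{2}$ and $\hat{Y}^{j}_{2}$ are built from the restrictions of $X^{j}$ and $Y^{j}$ to strips which, after the shifts $y\mapsto y-b$ and $y\mapsto y+b$, sit in the region of space-time that is causally determined \emph{only} by the part of the potential supported in $[0,b]$. Concretely, $\hat{Y}^{j}_{2}(k)=\mathcal{F}\{Y^{j}\chi_{[0,2r]}(y+b)\}(k)$ only sees $Y^{j}$ on a short interval, and by~(\ref{511}) we have $Y^{j}(y)=\tfrac{1}{4}V^{j}(y/2)+(\text{an }L^{1}\cap L^{\infty}\text{ term})$; since $V^{1}\equiv V^{2}$ on $[0,b]$ and the relevant $y$-values lie in $2\cdot[0,b]$, the pointwise data defining $\hat{Y}^{1}_{2}$ and $\hat{Y}^{2}_{2}$ coincide, whence the Fourier transforms coincide. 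The same goes for $\hat{X}^{j}_{2}$.

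In more detail, I would proceed in the following steps. First, unwind the definitions~(\ref{1.11}) and the corresponding one for $\hat{Y}^{j}_{2}$, so that the claim becomes the assertion that $X^{1}\equiv X^{2}$ and $Y^{1}\equiv Y^{2}$ on the indicated strips. Second, invoke the domain-of-dependence property of the wave equation in~(\ref{AA}): the solution $A^{j}_{\pm}(x,y)$ at a given $(x,y)$ depends on $V^{j}$ only through its values in the backward characteristic cone $\{|x'-x|\le|y'-y|\text{-type region}\}$, and the strips cut out by $\chi_{[2a-2r,2a]}(y-b)$ and $\chi_{[0,2r]}(y+b)$ are precisely the ones whose cones meet $\mathrm{supp}\,V^{j}$ only inside $[0,b]$ (this is exactly the ``red triangle'' / ``not in the domain of influence of $V^{j}\chi_{[a,0]}$'' remark made after~(\ref{1.12})). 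Third, conclude that on these strips $A^{1}_{\pm}\equiv A^{2}_{\pm}$, hence $D_{y}A^{1}_{-}\equiv D_{y}A^{2}_{-}$ there, hence the corresponding summands $X^{1}(y-x)+Y^{1}(y+x)\equiv X^{2}(y-x)+Y^{2}(y+x)$; separating the two summands by their different characteristic directions gives $X^{1}\equiv X^{2}$ and $Y^{1}\equiv Y^{2}$ on the relevant arguments. Fourth, apply the Fourier transform to obtain $\hat{X}^{1}_{2}\equiv\hat{X}^{2}_{2}$ and $\hat{Y}^{1}_{2}\equiv\hat{Y}^{2}_{2}$, which (correcting the evident typographical slip in the statement, where the superscripts should read $1$ and $2$) is the assertion.

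I expect the only real obstacle to be bookkeeping: making the correspondence between the strips $[2a-2r,2a]$ (shifted by $b$) in the $X$-variable and $[0,2r]$ (shifted by $-b$) in the $Y$-variable explicit in terms of the characteristic coordinates $y-x$ and $y+x$, and verifying carefully that these strips really do avoid the influence region of $V^{j}|_{[a,0]}$ — i.e. that the geometry drawn in Figure~\ref{F} is faithfully captured by the algebra. Once the characteristic coordinates are set up correctly, the uniqueness step is just the standard finite-propagation-speed argument for the $1$D wave equation (the energy estimates of \cite{Dya,Mellin,Tang,Zworski2} already cited for the forward problem), and passing to Fourier transforms is automatic since equal functions have equal transforms. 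No delicate analysis beyond~(\ref{1122})--(\ref{511}) and Lemma~\ref{21}-type exponential-type considerations is needed.
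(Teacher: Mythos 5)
Your proposal follows the same route as the paper's own proof --- a finite-speed-of-propagation reading of the characteristics picture in Figure~\ref{F} --- but the step you defer to ``bookkeeping'' is exactly where the argument does not close, and I do not think it can be closed in the form stated. The difficulty is that in~(\ref{AA}) the potential is a function of the \emph{time} variable $x$: it acts on the whole spatial line during the slab $x\in[a,b]$, not on a bounded spatial region. Hence the domain-of-dependence statement you invoke does not localize $V$ in the way you need: the backward cone of a point $(b,y)$ with $y-b\in[2a-2r,2a]$ already meets the incident characteristic $y'=x'$ at times $x'\in[a,0)$, so the events where $V\chi_{[a,0]}$ scatters the incident delta \emph{do} lie in the relevant domain of dependence. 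Concretely, a wave generated by $V(x_{1})$ with $x_{1}\in[a,0)$ travels left along $y+x=2x_{1}$ and is re-scattered by $V(x_{2})$ at a time $x_{2}=x_{1}+\ell\in[0,b]$ with $\ell\in[|a|,|a|+r]$ into a right-moving wave along $y-x=-2\ell\in[2a-2r,2a]$: this lands precisely in the strip defining $\hat{X}^{j}_{2}$, and at second order in $V$ it contributes products $V(x_{1})V(x_{2})$ coupling $V|_{[a,0)}$ with $V|_{[0,b]}$ (essentially the correlation of the potential at lags $\geq|a|$). Two potentials that agree on $[0,b]$ but differ on $[a,0)$ will generically produce different such terms, so $\hat{X}^{1}_{2}\equiv\hat{X}^{2}_{2}$ does not follow. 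The analogous three-bounce path (first two scatterings at times in $[a,0)$, the third at a time in $[0,|a|+r]$) enters the strip $y+x\in[0,2r]$ defining $\hat{Y}^{j}_{2}$, so the same objection applies there.

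Two further remarks. Your preliminary argument via~(\ref{511}) is a non sequitur: $Y^{j}(y)-\tfrac14 V^{j}(y/2)$ is only asserted to lie in $L^{1}\cap L^{\infty}$; that remainder encodes exactly the multiple-scattering contributions above and is not determined by $V^{j}|_{[0,b]}$, so ``the pointwise data coincide'' cannot be read off from~(\ref{511}). And you should be aware that the paper's own proof is the same single-scattering picture read off Figure~\ref{F}: it asserts, but does not prove, that the strips in question avoid the influence of $V^{j}\chi_{[a,0]}$. So you have faithfully reproduced the paper's argument, but the key causal-localization claim needs either a genuine proof or a correction; the energy estimates and finite-propagation-speed arguments you cite apply to spatially localized perturbations and do not supply the required independence here.
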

\begin{proof}
Let us discuss the Figure \ref{F}. Let us first take $b=0$, so the wave starting inside the triangle: $(0,0)$, $(a,a)$, $(0,2a)$ propagate in parallel strips between the shaded area between the lines pointing at $(a,a)$ and $(0,2a)$ and the lines pointing at $(a,a)$ and $(0,0)$. The wave function $X^{j}_{1}$ and $Y^{j}_{1}$, $j=1,2$, are defined on the those two strips correspondingly. Now we consider $x\in[0,b]$, if $V^{1}_{2}\equiv V^{2}_{2}$ on $[0,b]$,  then the wave function $Y^{1}_{2}\equiv Y^{2}_{2}$ are defined on the strips between the lines contain $(0,0)$ and $(b,b)$. Similarly, the wave function $X^{1}_{2}\equiv X^{2}_{2}$ are defined on the strips between the lines contain $(0,2a)$ and $(b,2a-b)$. Now we deduce from~(\ref{1.10}) that $\hat{X}^{1}_{2}(k)\equiv\hat{X}^{1}_{2}(k)$. Similar arguments works for $\hat{Y}^{1}_{2}(k)\equiv\hat{Y}^{1}_{2}(k)$.

\end{proof}

\section{Proof of Theorem \ref{11}}

We start with the assumption of Theorem \ref{1.1}
$$\det S^{1}(k)\equiv\det S^{2}(k),$$
that is 
\begin{equation}\label{3.1}
\frac{\hat{X}^{1}(-k)}{\hat{X}^{1}(k)}\equiv\frac{\hat{X}^{2}(-k)}{\hat{X}^{2}(k)}.
\end{equation}
Using Lemma \ref{24} and comparing the poles on both sides of~(\ref{3.1}), $\hat{X}^{1}(k)$ and $\hat{X}^{2}(k)$ have common zeros in $\mathbb{C}^{-}$, except finite ones in $\mathbb{C}^{+}$.
If $\sigma$ is a common zero of $\hat{X}^{1}(k)$ and $\hat{X}^{2}(k)$, then 
\begin{equation}
\hat{X}^{1}_{1}(\sigma)
+\hat{X}^{1}_{3}(\sigma)=\hat{X}^{2}_{1}(\sigma)+\hat{X}^{2}_{3}(\sigma)=0,
\end{equation}
by Lemma \ref{id}. Now we want to show that
\begin{equation}\label{3.2}
\hat{X}^{1}(k):=\hat{X}^{1}_{1}(k)+\hat{X}^{1}_{2}(k)
+\hat{X}^{1}_{3}(k)\equiv\hat{X}^{2}_{1}(k)+\hat{X}^{2}_{2}(k)+\hat{X}^{2}_{3}(k)=:\hat{X}^{2}(k).
\end{equation}
That is, 
\begin{equation}\label{3.22}
\hat{X}^{1}_{1}(k)
+\hat{X}^{1}_{3}(k)\equiv\hat{X}^{2}_{1}(k)+\hat{X}^{2}_{3}(k).
\end{equation}
By Lemma \ref{id},~(\ref{3.2}) and~(\ref{3.22}) should have same density of common zeros.
\par
Let us count the zero set of
\begin{equation}
F(k):=\hat{X}^{j}(k),\,j=1,2,
\end{equation}
and the zero set of
\begin{equation}\label{3.3}
G(k):=\hat{X}^{1}_{1}(k)
+\hat{X}^{1}_{3}(k)-\hat{X}^{2}_{1}(k)-\hat{X}^{2}_{3}(k).
\end{equation}
Using ~(\ref{119}) and~(\ref{120}) in Appendix, we deduce that
\begin{eqnarray}
&&h_{F}(\theta)=(b-a)|\sin\theta|;\\
&&h_{G}(\theta)=\max\{|a|,|b-r|\}|\sin\theta|=|b-r||\sin\theta|.
\end{eqnarray}
Using Theorem \ref{C}, $F(z)$  has zero density $\frac{(b-a)}{\pi}$, and $G(k)$  has identical zero density $\frac{b-r}{\pi}$. That contradicts to Lemma \ref{id}. Then, we deduce that 
$$
\hat{X}^{1}_{1}(k)
+\hat{X}^{1}_{3}(k)\equiv\hat{X}^{2}_{1}(k)+\hat{X}^{2}_{3}(k)
.$$
Due to Lemma \ref{id}, we have
$$\hat{X}^{1}(k)\equiv\hat{X}^{2}(k).$$
\par
Using~(\ref{U}), we have
\begin{equation}\label{312}
\hat{X}^{j}(k)\hat{X}^{j}(-k)=k^{2}+\hat{Y}^{j}(k)\hat{Y}^{j}(-k).
\end{equation}
Thus, we obtain
\begin{equation}\label{313}
\hat{Y}^{1}(k)\hat{Y}^{1}(-k)\equiv\hat{Y}^{2}(k)\hat{Y}^{2}(-k),\,k\in\mathbb{C}.
\end{equation}
Equivalently,
$$|\hat{Y}^{1}(k)|^{2}=|\hat{Y}^{2}(k)|^{2},\,k\in\mathbb{R},$$
and we apply Theorem \ref{NL} to deduce
\begin{equation}\label{3100}
\hat{Y}^{1}(k)\prod_{n=1}^{\infty}\frac{1-\frac{k}{\overline{a}^{1}_{n}}}{1-\frac{k}{a^{1}_{n}}}=e^{i\gamma}\hat{Y}^{2}(k)\prod_{n=1}^{\infty}\frac{1-\frac{k}{\overline{a}^{2}_{n}}}{1-\frac{z}{a^{2}_{n}}},
\end{equation}
where $\{a_{n}^{j}\}$ are the zeros of $\hat{Y}^{j}(k)$ in $\mathbb{C}^{+}$. The Blaschke product
$$\prod_{n=1}^{\infty}\frac{1-\frac{k}{\overline{a}^{j}_{n}}}{1-\frac{k}{a^{j}_{n}}}$$ is a function of zero type and of zero density of zeros. We refer the detail to \cite{Boas}. Again, the zero density of~(\ref{3100}) is $\frac{b-a}{\pi}$. Thus, $\hat{Y}^{1}(k)$ and $\hat{Y}^{2}(k)$ have common zero of density $\frac{b-a}{\pi}$. That is,
\begin{equation}
\hat{Y}^{1}(\sigma)=\hat{Y}^{2}(\sigma),\,\forall\sigma\in\Sigma,
\end{equation}
in which $\Sigma$ is a set of density $\frac{b-a}{\pi}$. That is,
\begin{equation}
\hat{Y}^{1}_{1}(\sigma)+\hat{Y}^{1}_{2}(\sigma)+\hat{Y}^{1}_{3}(\sigma)=\hat{Y}^{2}_{1}(\sigma)+\hat{Y}^{2}_{2}(\sigma)+\hat{Y}^{2}_{3}(\sigma),\,\forall\sigma\in\Sigma.
\end{equation}
That is, by Lemma \ref{id},
\begin{equation}
\hat{Y}^{1}_{1}(\sigma)+\hat{Y}^{1}_{3}(\sigma)=\hat{Y}^{2}_{1}(\sigma)+\hat{Y}^{2}_{3}(\sigma),\,\forall\sigma\in\Sigma,
\end{equation}
which however could have a zero set of density $\frac{b-a-r}{\pi}$.
Hence, $\hat{Y}^{1}_{1}(k)+\hat{Y}^{1}_{3}(k)\equiv\hat{Y}^{2}_{1}(k)+\hat{Y}^{2}_{3}(k)$, and then
$$\hat{Y}^{1}(k)\equiv\hat{Y}^{2}(k).$$
Therefore, we deduce that $V^{1}$ and $V^{2}$ have the same scattering matrix;
\begin{equation}
S^{1}(k)\equiv S^{2}(k).
\end{equation}
Using Zworski \cite[Proposition\,8]{Zworski}, which says that the potential function with compact support is determined by the scattering matrix, we deduce that 
\begin{eqnarray}
V^{1}(x)\equiv V^{2}(x).
\end{eqnarray}
This proves the theorem.

\section{Appendix}
We review some results from complex analysis  \cite{Boas,Levin2}.
\begin{definition}
Let $f(z)$ be an entire function. Let
\begin{equation}\nonumber
M_f(r):=\max_{|z|=r}|f(z)|.
\end{equation}
An entire function of $f(z)$ is said
to be a function of finite order if there exists a positive
constant $k$ such that the inequality
\begin{equation}\nonumber
M_f(r)<e^{r^k}
\end{equation}
is valid for all sufficiently large values of $r$. The greatest
lower bound of such numbers $k$ is called the order of the entire
function $f(z)$. By the type $\sigma$ of an entire function $f(z)$
of order $\rho$, we mean the greatest lower bound of positive
number $A$ for which asymptotically we have
\begin{equation}\nonumber
M_f(r)<e^{Ar^\rho}.
\end{equation}
That is,
\begin{equation}\nonumber
\sigma_{f}:=\limsup_{r\rightarrow\infty}\frac{\ln M_f(r)}{r^\rho}.
\end{equation}  If $0<\sigma_{f}<\infty$, then we say
$f(z)$ is of normal type or mean type. For $\sigma_{f}=0$, we say $f(z)$ is of minimal type.
\end{definition}
We refer the details to \cite{Levin2}.
\begin{definition}\label{33}
Let $f(z)$ be an integral function of finite order $\rho$ in the
angle $[\theta_1,\theta_2]$. We call the following quantity as the
indicator function of function $f(z)$.
\begin{equation}\nonumber
h_f(\theta):=\lim_{r\rightarrow\infty}\frac{\ln|f(re^{i\theta})|}{r^{\rho}},
\,\theta_1\leq\theta\leq\theta_2.
\end{equation}
\end{definition}
The type of a function is connected to the maximal value of indicator function.
\begin{definition}\label{d}
The following quantity is called the width of the indicator
diagram of entire function $f$:
\begin{equation}\label{22222}
d=h_f(\frac{\pi}{2})+h_f(-\frac{\pi}{2}).
\end{equation}
\end{definition}

\begin{definition}\label{255}
Let $f(z)$ be an integral function of order $1$, and let
$n(f,\alpha,\beta,r)$ denote the number of the zeros of $f(z)$
inside the angle $[\alpha,\beta]$ and $|z|\leq r$. We define the
density function as
\begin{equation}\nonumber
\Delta_f(\alpha,\beta):=\lim_{r\rightarrow\infty}\frac{n(f,\alpha,\beta,r)}{r},
\end{equation}
and
\begin{equation}\nonumber
\Delta_f(\beta):=\Delta_f(\alpha_0,\beta),
\end{equation}
with some fixed $\alpha_0\notin E$ such that $E$ is at most a
countable set \cite{Boas,Levin,Levin2}. In particular, we denote the density function of $f$ on the open right/left half complex plane as $\Delta^{+}_{f}$/$\Delta^{-}_{f}$ respectively. Similarly, we can define the set density of a zero set $S$. Let $n(
S,r)$ be the number of the discrete elements of $S$ in $\{|z|<r\}$. We define
\begin{equation}\nonumber
\Delta_S:=\lim_{r\rightarrow\infty}\frac{n(S,r)}{r},
\end{equation}

\end{definition}
\begin{theorem}[Cartwright]\label{C}
Let $f$ be an entire function of exponential type with zero set $\{a_{k}\}$. We assume $f$ satisfies one of the
following conditions:
\begin{equation}\nonumber
\mbox{ the integral
}\int_{-\infty}^\infty\frac{\ln^+|f(x)|}{1+x^2}dx\mbox{ exists}.
\end{equation}
\begin{equation}\nonumber
|f(x)|\mbox{ is bounded on the real axis}.
\end{equation}
Then
\begin{enumerate}
\item all of the zeros of the function $f(z)$, except possibly
those of a set of zero density, lie inside arbitrarily small
angles $|\arg z|<\epsilon$ and $|\arg z-\pi|<\epsilon$, where the
density
\begin{equation}
\Delta_f(-\epsilon,\epsilon)=\Delta_f(\pi-\epsilon,\pi+\epsilon)=\lim_{r\rightarrow\infty}
\frac{n(f,-\epsilon,\epsilon,r)}{r}
=\lim_{r\rightarrow\infty}\frac{n(f,\pi-\epsilon,\pi+\epsilon,r)}{r},
\end{equation}
is equal to $\frac{d}{2\pi}$, where $d$ is the width of the
indicator diagram in~(\ref{22222}). Furthermore, the limit
$\delta=\lim_{r\rightarrow\infty}\delta(r)$ exists, where
$$
\delta(r):=\sum_{\{|a_k|<r\}}\frac{1}{a_k};
$$
\item moreover,
\begin{equation}\nonumber
\Delta_f(\epsilon,\pi-\epsilon)=\Delta_f(\pi+\epsilon,-\epsilon)=0;
\end{equation}
\item the function $f(z)$ can be represented in the form
\begin{equation}\nonumber
f(z)=cz^me^{i\kappa
z}\lim_{r\rightarrow\infty}\prod_{\{|a_k|<r\}}(1-\frac{z}{a_k}),
\end{equation}
where $c,m,\kappa$ are constants and $\kappa$ is real;
\item the indicator
function of $f$ is of the form
\begin{equation}
h_f(\theta)=\sigma|\sin\theta|.
\end{equation}
\end{enumerate}
\end{theorem}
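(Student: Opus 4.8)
The plan is to follow the classical development (Boas, Levin): first control the growth of $f$ by the Phragm\'en--Lindel\"of principle, then pass to the zeros by the Jensen and Carleman formulae and Levin's completely-regular-growth theory. Since boundedness of $f$ on $\mathbb{R}$ already forces $\int_{-\infty}^{\infty}\frac{\ln^{+}|f(x)|}{1+x^{2}}\,dx<\infty$, I work throughout under the log-integrability hypothesis. As $f$ has exponential type, $h_{f}$ is a $2\pi$-periodic trigonometrically convex function; restricting $f$ to the closed upper half-plane and applying Phragm\'en--Lindel\"of together with the log-integrability on the real axis yields $\ln|f(z)|\le h_{f}(\tfrac{\pi}{2})\,\Im z+o(|z|)$ for $\Im z\ge0$, and symmetrically $\ln|f(z)|\le -h_{f}(-\tfrac{\pi}{2})\,\Im z+o(|z|)$ for $\Im z\le0$. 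Hence $h_{f}(\theta)=h_{f}(\tfrac{\pi}{2})|\sin\theta|$ on $[0,\pi]$ and $h_{f}(\theta)=h_{f}(-\tfrac{\pi}{2})|\sin\theta|$ on $[-\pi,0]$, which in the balanced case $\sigma:=h_{f}(\tfrac{\pi}{2})=h_{f}(-\tfrac{\pi}{2})$ is precisely item~(4); the width of the indicator diagram is $d=h_{f}(\tfrac{\pi}{2})+h_{f}(-\tfrac{\pi}{2})$.

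For the zeros, Jensen's formula on $|z|\le r$ gives $\int_{0}^{r}\frac{n(t)}{t}\,dt=\frac{1}{2\pi}\int_{0}^{2\pi}\ln|f(re^{i\theta})|\,d\theta+O(\ln r)=O(r)$, so $\{a_{k}\}$ has finite upper density and the canonical product over it (of genus at most one) is well defined. I would then apply a Carleman-type formula on the half-disk $\{|z|\le R,\ \Im z\ge0\}$ and its reflection on the lower half-disk: it expresses $\sum_{|a_{k}|<R,\ \Im a_{k}>0}\big(\tfrac{1}{r_{k}}-\tfrac{r_{k}}{R^{2}}\big)\sin\theta_{k}$ through an integral of $\ln|f|$ over $[-R,R]$ (finite by hypothesis) and over the semicircle (controlled by $h_{f}$, hence bounded as $R\to\infty$). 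A positive density of zeros in a sector $\{\epsilon<\arg z<\pi-\epsilon\}$, where $\sin\theta_{k}$ is bounded below, would force the left side to diverge against the bounded right side, which is impossible; this is item~(2). Since the summands on the left all have one sign, their convergence also makes $\sum_{\Im a_{k}>0}\Im(1/a_{k})$ absolutely convergent; with the mirror identity $\sum_{k}\Im(1/a_{k})$ converges, the log-integrability on $\mathbb{R}$ controls $\sum_{|a_{k}|<r}\Re(1/a_{k})$, and hence $\delta=\lim_{r}\delta(r)$ exists. These bounds say that $f$ has completely regular growth in the directions $\pm\tfrac{\pi}{2}$, whereupon Levin's theorem equates the angular density of zeros about a ray with $\tfrac{1}{2\pi}$ times the jump of $h_{f}'$ across that ray; the jump vanishes off $\{0,\pi\}$ and equals $h_{f}(\tfrac{\pi}{2})+h_{f}(-\tfrac{\pi}{2})=d$ at $0$ and at $\pi$, so each of the two $\epsilon$-sectors carries density $\tfrac{d}{2\pi}$. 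This is item~(1).

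For item~(3): convergence of $\delta(r)$ together with $n(r)=O(r)$ is Lindel\"of's criterion for $P(z):=\lim_{r\to\infty}\prod_{|a_{k}|<r}(1-z/a_{k})$ to be an entire function of exponential type whose zeros are exactly the nonzero zeros of $f$. Then $f(z)/(z^{m}P(z))$ --- with $z^{m}$ absorbing a possible zero of $f$ at the origin --- is entire, zero-free and of exponential type, hence equals $ce^{\lambda z}$; the half-plane growth bounds of the first paragraph force $\lambda=i\kappa$ with $\kappa$ real, giving $f(z)=cz^{m}e^{i\kappa z}P(z)$.

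The main obstacle is the concentration behind item~(2) and the identification of the density in item~(1). Phragm\'en--Lindel\"of bounds $\ln|f|$ only from above, while each zero sends $\ln|f|$ plunging locally, and converting the heuristic that $\ln|f|$ cannot plunge with positive density while keeping its average growth into a quantitative density-zero statement is exactly what the Carleman formula accomplishes (equivalently, Levinson's sharpening of Phragm\'en--Lindel\"of and the completely-regular-growth theory). One must further check that zeros accumulating toward the endpoints $\arg z=0,\pi$ of the semicircle, where the weight $\sin\theta_{k}$ degenerates, do not corrupt the estimate, and that the \emph{balanced} hypothesis is precisely what upgrades the one-sided indicator to the symmetric form $\sigma|\sin\theta|$ of item~(4) --- without it, e.g.\ for $e^{iz}$, only the one-sided statement survives.
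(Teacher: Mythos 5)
Your sketch is correct in outline and follows exactly the route the paper itself relies on: Theorem \ref{C} is the classical Cartwright theorem, for which the paper gives no argument beyond the citation to Levin \cite{Levin}, and your Phragm\'en--Lindel\"of / Nevanlinna-representation bound, Jensen and Carleman formulas, Lindel\"of's criterion for the principal-value product, and the completely-regular-growth identification of the angular density $\tfrac{d}{2\pi}$ are precisely the steps of that classical proof, with the genuinely hard pieces (the Carleman bookkeeping, convergence of $\sum_{|a_k|<r}\Re(1/a_k)$, Levin's CRG theory) delegated to the same references the paper cites. Your closing caveat is also well taken: item (4) as literally stated holds only in the balanced case $h_f(\tfrac{\pi}{2})=h_f(-\tfrac{\pi}{2})$ (e.g.\ it fails for $e^{iz}$), the general Cartwright-class statement being $h_f(\theta)=\sigma^{+}\sin\theta$ on $[0,\pi]$ and $h_f(\theta)=\sigma^{-}|\sin\theta|$ on $[-\pi,0]$ with possibly different $\sigma^{\pm}$.
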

We refer the Cartwright's theory to Levin \cite[p,251]{Levin}.
\begin{lemma}\label{36}
Let $f$, $g$ be two entire functions. Then the following two
inequalities hold.
\begin{eqnarray}
&&h_{fg}(\theta)\leq h_{f}(\theta)+h_g(\theta),\mbox{ if one limit exists};\label{119}\\\label{120}
&&h_{f+g}(\theta)\leq\max_\theta\{h_f(\theta),h_g(\theta)\},
\end{eqnarray}
where the equality in~(\ref{119}) holds if one of the functions is of completely regular growth, and secondly the equality~(\ref{120}) holds if the indicator of the two summands are not equal at some $\theta_0$.
\end{lemma}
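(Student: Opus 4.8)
The plan is to obtain both inequalities directly from Definition~\ref{33}, using only elementary properties of the upper limit, and then to dispose of the two equality assertions by short separate arguments. Throughout I work with a common order $\rho$ for $f$ and $g$ (otherwise replace $\rho$ by the larger of the two orders).

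\textbf{The inequality~(\ref{119}).} Since $|f(re^{i\theta})g(re^{i\theta})| = |f(re^{i\theta})|\,|g(re^{i\theta})|$, one has $\ln|fg| = \ln|f| + \ln|g|$ wherever both factors are nonzero; dividing by $r^{\rho}$ and taking the upper limit as $r\to\infty$, the subadditivity $\limsup(u+v)\le\limsup u+\limsup v$ gives $h_{fg}(\theta)\le h_f(\theta)+h_g(\theta)$. For the equality clause, suppose $g$ is of completely regular growth (the roles of $f$ and $g$ being symmetric). Then $\ln|g(re^{i\theta})|/r^{\rho}\to h_g(\theta)$ as a genuine limit for $r$ outside an exceptional set of zero relative density, and along a sequence $r_n\to\infty$ that both avoids this exceptional set and realizes the upper limit for $f$ one gets $\ln|fg(r_ne^{i\theta})|/r_n^{\rho}\to h_f(\theta)+h_g(\theta)$, whence $h_{fg}(\theta)\ge h_f(\theta)+h_g(\theta)$; combined with the reverse bound this is equality.

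\textbf{The inequality~(\ref{120}).} From $|f(re^{i\theta})+g(re^{i\theta})|\le 2\max\{|f(re^{i\theta})|,|g(re^{i\theta})|\}$ we obtain $\ln|f+g|\le \ln 2 + \max\{\ln|f|,\ln|g|\}$; dividing by $r^{\rho}$, letting $r\to\infty$, and using $\limsup\max\{u,v\}=\max\{\limsup u,\limsup v\}$ yields $h_{f+g}(\theta)\le\max\{h_f(\theta),h_g(\theta)\}$. For the equality under the hypothesis $h_f(\theta_0)\ne h_g(\theta_0)$, say $h_f(\theta_0)>h_g(\theta_0)$, choose $\epsilon>0$ with $h_f(\theta_0)-\epsilon>h_g(\theta_0)+\epsilon$ and a sequence $r_n\to\infty$ realizing the upper limit for $f$; for $n$ large $|f(r_ne^{i\theta_0})|\ge e^{(h_f(\theta_0)-\epsilon)r_n^{\rho}}$ while $|g(r_ne^{i\theta_0})|\le e^{(h_g(\theta_0)+\epsilon)r_n^{\rho}}$, so $|f+g|\ge|f|-|g|\ge e^{(h_f(\theta_0)-\epsilon)r_n^{\rho}}(1-o(1))$ and hence $h_{f+g}(\theta_0)\ge h_f(\theta_0)-\epsilon$; letting $\epsilon\downarrow0$ gives $h_{f+g}(\theta_0)=\max\{h_f(\theta_0),h_g(\theta_0)\}$.

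The two inequalities and the sum-equality are routine; the only genuinely delicate point is the equality in~(\ref{119}). It rests on the structure theory of functions of completely regular growth — in particular on the fact that $\ln|g(re^{i\theta})|/r^{\rho}$ converges to $h_g(\theta)$ off a $C^{0}$-set of radii, and that such a set can be avoided without affecting the subsequence that extracts the upper limit of $\ln|f(re^{i\theta})|/r^{\rho}$. For this I would quote the relevant theorems of Levin~\cite{Levin,Levin2} (and the parallel discussion in Boas~\cite{Boas}) rather than reprove them here.
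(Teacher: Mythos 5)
Your proposal is correct, but it takes a different route from the paper: the paper offers no argument at all for this lemma, simply referring the reader to Levin~\cite{Levin}, whereas you actually prove the two inequalities and the sum-equality by elementary $\limsup$ manipulations and defer only the product-equality to the theory of completely regular growth. Your inequality arguments are sound: $\ln|fg|=\ln|f|+\ln|g|$ plus subadditivity of the upper limit gives~(\ref{119}), and $|f+g|\le 2\max\{|f|,|g|\}$ gives~(\ref{120}) (you correctly read the paper's $\max_\theta$ as a pointwise maximum at each fixed $\theta$, which is what is intended and what the application in Section~3 uses). The equality argument for~(\ref{120}) at a $\theta_0$ with $h_f(\theta_0)>h_g(\theta_0)$ is also complete, since the $\limsup$ definition gives the eventual upper bound on $|g|$ along the ray and the lower bound $|f+g|\ge|f|-|g|$ does the rest. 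The one place where you rightly stop and cite Levin is the equality clause in~(\ref{119}): the existence of a sequence $r_n\to\infty$ that simultaneously avoids the $C^0$ exceptional set of the completely regular growth function and realizes the upper limit for the other factor is not automatic from the definitions --- it requires the fact that, for a function of exponential type, the $\limsup$ defining the indicator along a ray is unchanged when a set of radii of zero relative density is removed. That is a genuine theorem (proved via estimates on the size of the set where an entire function of exponential type can be small), so flagging it and quoting Levin there is the right call; with that citation your write-up is a self-contained improvement on the paper's bare reference. Two cosmetic points: the paper's Definition~\ref{33} writes $\lim$ where the standard (Boas/Levin) definition uses $\limsup$, and your proof implicitly uses the $\limsup$ version, which is the correct reading; and in the application here both functions have order $\rho=1$, so your remark about taking the larger order is harmless but not needed.
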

\begin{proof}
 We can find
the details in \cite{Levin}.
\end{proof}

\begin{lemma}\label{38}
The Fourier transform $\hat{X}(z)$ as in~(\ref{1.6}) is of Cartwright class, and the function can be represented in the form
$$\hat{X}(z)=cz^{m}e^{i\delta z}\lim_{R\rightarrow\infty}\prod_{|\sigma_{n}|<R}(1-\frac{z}{\sigma_{n}}),\,z=x+iy,$$
where $\delta\in\mathbb{R}$, and the following integral converges:
\begin{equation}\label{2100}
\int_{-\infty}^{\infty}\frac{\ln^{+}|\hat{X}(x)|}{1+x^{2}}dx<\infty.
\end{equation}
Similar results hold for $\hat{Y}(k)$.
\end{lemma}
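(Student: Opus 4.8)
The plan is to read off the analytic structure directly from the representation of the kernel $X$ in~(\ref{1122}). Write $X(x) = \delta'(x) - \tfrac{1}{2}\big(\int V(t)\,dt\big)\delta(x) + g(x)$ with $g \in L^{1}(\mathbb{R}) \cap L^{\infty}(\mathbb{R})$ supported in $[-2(b-a),0]$. Applying the Fourier transform~(\ref{1.6}) term by term gives
\begin{equation}\nonumber
\hat{X}(k) = ik - \tfrac{1}{2}\int V(t)\,dt + \hat{g}(k).
\end{equation}
Since $g$ has compact support in $[-2(b-a),0]$, the Paley--Wiener theorem shows $\hat{g}$ is entire of exponential type with indicator diagram contained in $i[-2(b-a),0]$; the polynomial summand $ik - \tfrac{1}{2}\int V$ is entire of type zero, so $\hat{X}$ is entire of exponential type, and by Lemma~\ref{21} (applied to~(\ref{1.6})) its indicator diagram is exactly $i[-2(b-a),0]$, of width $d = 2(b-a)$. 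When $V \not\equiv 0$ the type is positive, so the order of $\hat{X}$ is exactly $1$.

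Next I would verify the logarithmic integrability~(\ref{2100}). On the real axis $|\hat{g}(x)| \le \|g\|_{L^{1}}$ by Riemann--Lebesgue, hence
\begin{equation}\nonumber
|\hat{X}(x)| \le |x| + \tfrac{1}{2}\Big|\int V(t)\,dt\Big| + \|g\|_{L^{1}} =: |x| + C, \qquad x \in \mathbb{R},
\end{equation}
so $\ln^{+}|\hat{X}(x)| \le \ln(|x| + C + 1)$, which is integrable against $(1+x^{2})^{-1}\,dx$. Thus $\hat{X}$ satisfies the first hypothesis of Cartwright's theorem (Theorem~\ref{C}); in particular $\hat{X}$ lies in the Cartwright class, and part~(3) of that theorem furnishes the canonical product
\begin{equation}\nonumber
\hat{X}(z) = c z^{m} e^{i\delta z} \lim_{R\to\infty}\prod_{|\sigma_{n}|<R}\Big(1 - \frac{z}{\sigma_{n}}\Big),
\end{equation}
with $c, m$ constants and $\delta$ real (the reality of $\delta$ is part of the conclusion of Theorem~\ref{C}, and is consistent with the indicator diagram lying on the imaginary axis).

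For $\hat{Y}$ the same scheme is shorter. By~(\ref{511}), $Y(y) = \tfrac{1}{4}V(y/2) + h(y)$ with $h \in L^{1} \cap L^{\infty}$, and since $V \in L_{comp}^{1}([a,b])$ the full kernel $Y$ lies in $L^{1}(\mathbb{R})$ with support in $[2a,2b]$. Then $\hat{Y}$ is entire of exponential type by Paley--Wiener and $|\hat{Y}(x)| \le \|Y\|_{L^{1}}$ on $\mathbb{R}$, so~(\ref{2100}) holds trivially and Theorem~\ref{C} applies verbatim, giving the analogous representation. I do not expect a genuine obstacle here: the only delicate bookkeeping is the splitting of $\hat{X}$ into the distributional part $ik - \tfrac{1}{2}\int V$, which produces the linear growth on $\mathbb{R}$ (and matches the $ik$ entry of the scattering matrix~(\ref{S})), and the $L^{1}$ remainder — together with the observation that this linear growth is still slow enough for~(\ref{2100}) to converge.
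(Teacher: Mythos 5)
Your proposal is correct, and it in fact supplies more than the paper does: the paper's ``proof'' of Lemma~\ref{38} is only a pointer to the definition of the Cartwright class in Levin's books, with no verification at all. You verify membership directly: the decomposition of $X$ from~(\ref{1122}) into $\delta'(x)-\tfrac12\big(\int V\big)\delta(x)$ plus an $L^{1}\cap L^{\infty}$ remainder supported in $[-2(b-a),0]$ (the support being implicit in the paper's definition~(\ref{1.6})), Paley--Wiener for the remainder, the bound $|\hat{g}(x)|\le\|g\|_{L^{1}}$ on the real axis (this is the elementary $L^{1}$ bound, not Riemann--Lebesgue, which would give decay), hence $\ln^{+}|\hat{X}(x)|=O(\ln|x|)$ and~(\ref{2100}); Theorem~\ref{C}(3) then yields the product representation with real exponent, and the same scheme via~(\ref{511}) handles $\hat{Y}$, which is even bounded on $\mathbb{R}$. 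Two asides in your write-up are not strictly licensed but also not needed: the claim that the indicator diagram is exactly $i[-2(b-a),0]$ ``by Lemma~\ref{21}'' does not follow directly, since Lemma~\ref{21} concerns Fourier--Stieltjes transforms of bounded-variation functions and the $\delta'$ part of $X$ is not a measure (only the inclusion of the diagram in $i[-2(b-a),0]$ is needed here); similarly the remark that the order is exactly $1$ when $V\not\equiv0$ plays no role. Discounting these, your argument is a complete and standard verification, whereas the paper delegates the entire content of the lemma to the references.
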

\begin{proof}
We refer the definition of Cartwright class to \cite{Levin,Levin2}.
\end{proof}
\begin{theorem}[Nevanlinna-Levin]\label{NL}
If the function $F(z)$ is holomorphic and of exponential type in the half-plane $\Im z\geq0$, and if~(\ref{2100}) holds, then
\begin{enumerate}
\item \begin{equation}\nonumber
F(z)\prod_{k=1}^{\infty}\frac{1-\frac{z}{\overline{a}_{k}}}{1-\frac{z}{a_{k}}}=e^{i\gamma}e^{u(z)+iv(z)},
\end{equation}
where $$u(z)=\frac{y}{\pi}\int_{-\infty}^{\infty}\frac{\ln|F(t)|}{(t-x)^{2}+y^{2}}dt+\sigma^{+}_{F} y,$$  $\sigma^{+}_{F}=h_{F}(\frac{\pi}{2})$, $v(z)$ is the harmonic conjugate of $u(z)$, and $\{a_{k}\}$ are the zeros of the function $F(z)$ in the half-plane $\Im z>0$;
\item
 $$
\ln|F(z)|=\frac{y}{\pi}\int_{-\infty}^{\infty}\frac{\ln|F(t)|}{(t-x)^{2}+y^{2}}dt+\sigma^{+}_{F} y+\ln|\chi(z)|,\,z=x+iy,
$$
where
$$\chi(z)=\prod_{k=1}^{\infty}\frac{1-\frac{z}{a_{k}}}{1-\frac{z}{\overline{a}_{k}}}.$$
\end{enumerate}
\end{theorem}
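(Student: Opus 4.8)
The plan is to obtain the factorization by peeling the zeros of $F$ in the upper half-plane off with a half-plane Blaschke product, reducing the problem to a zero-free function whose logarithm is harmonic, and then invoking the Poisson representation for harmonic functions in a half-plane together with a Phragm\'{e}n--Lindel\"{o}f argument. First I would use the hypotheses --- $F$ holomorphic and of exponential type on $\{\Im z\ge 0\}$ and $\int_{-\infty}^{\infty}\frac{\ln^{+}|F(x)|}{1+x^{2}}\,dx<\infty$ --- to prove that the zeros $\{a_{k}\}$ of $F$ in the open upper half-plane satisfy the Blaschke condition $\sum_{k}\frac{\Im a_{k}}{1+|a_{k}|^{2}}<\infty$. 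This follows from Carleman's formula (the half-plane analogue of Jensen's formula) on the half-discs $\{|z|<R,\ \Im z>0\}$: its boundary terms split into a logarithmic integral over $[-R,R]$, bounded by hypothesis, and an arc contribution of the form $\frac{1}{R}\int_{0}^{\pi}\ln|F(Re^{i\theta})|\sin\theta\,d\theta$, which stays bounded because $F$ has exponential type.

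Granting the Blaschke condition, I would set $B(z)=\prod_{k}\frac{1-z/\overline{a}_{k}}{1-z/a_{k}}$ and $\chi(z)=1/B(z)=\prod_{k}\frac{1-z/a_{k}}{1-z/\overline{a}_{k}}$, verify local uniform convergence on $\{\Im z\ge 0\}$ away from the $a_{k}$, note that $|B(x)|=1$ on $\mathbb{R}$, and observe that $|\chi(z)|\le 1$ on the open upper half-plane since each of its factors has modulus $|z-a_{k}|/|z-\overline{a}_{k}|<1$ there. Then $\Phi(z):=F(z)B(z)$ is holomorphic and zero-free on the open upper half-plane --- the poles of $B$ at the $a_{k}$ exactly cancel the zeros of $F$ --- remains of exponential type with $h_{\Phi}(\tfrac{\pi}{2})=h_{F}(\tfrac{\pi}{2})=:\sigma_{F}^{+}$ (a Blaschke product is of zero type), and satisfies $|\Phi(x)|=|F(x)|$ on $\mathbb{R}$. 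Since the half-plane is simply connected and $\Phi$ is zero-free, a single-valued branch $\log\Phi=u+iv$ exists with $u=\ln|\Phi|$ harmonic and $v$ its harmonic conjugate; the additive freedom in $v$ produces the unimodular factor $e^{i\gamma}$ of the statement, and the identity $\Phi=e^{i\gamma}e^{u+iv}$ is then $F(z)\prod_{k}\frac{1-z/\overline{a}_{k}}{1-z/a_{k}}=e^{i\gamma}e^{u+iv}$.

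The crux is to show that this harmonic $u$ coincides with its own Poisson integral plus a linear term:
\[
u(z)=\frac{y}{\pi}\int_{-\infty}^{\infty}\frac{\ln|F(t)|}{(t-x)^{2}+y^{2}}\,dt+\sigma_{F}^{+}\,y,\qquad \sigma_{F}^{+}=h_{F}(\tfrac{\pi}{2}).
\]
I would first prove the inequality ``$\le$'' by a Phragm\'{e}n--Lindel\"{o}f argument: the function $g(z):=u(z)-\sigma_{F}^{+}y-P(z)$, where $P$ is the Poisson extension of the boundary trace $\ln|F(t)|$ of $u$, is harmonic on the upper half-plane with zero boundary values a.e.\ and with $g(z)=o(|z|)$ (using $\ln|F(Re^{i\theta})|\le\sigma_{F}^{+}R\sin\theta+o(R)$ on the arc and the sublinear growth of Poisson integrals of log-integrable data), hence $g\le 0$. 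For the reverse inequality, the difference $w$ of the two sides of the displayed identity is harmonic and $\le 0$ on the upper half-plane with boundary values $0$ a.e., so by the Herglotz representation of non-negative harmonic functions $-w(z)=\frac{y}{\pi}\int\frac{d\mu(t)}{(t-x)^{2}+y^{2}}+c\,y$ with $\mu\ge 0$ singular and $c\ge 0$; the growth $w(z)=o(y)$ forces $c=0$, and one then shows $\mu=0$, giving $w\equiv 0$. Part 2 follows immediately by taking $\ln|\cdot|$ in Part 1, since $\ln|F(z)|=u(z)-\ln|B(z)|=u(z)+\ln|\chi(z)|$ and $|F(t)|=|\Phi(t)|$ on $\mathbb{R}$.

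I expect the main obstacle to be the elimination of the singular measure $\mu$ just mentioned --- equivalently, showing that $\Phi$ carries no singular inner factor beyond the explicit $e^{-i\sigma_{F}^{+}z}$ already encoded in the linear term. For a completely general holomorphic $F$ this would require an additional hypothesis; but in each application of Theorem \ref{NL} made here the function is, by Lemma \ref{38}, an \emph{entire} function of Cartwright class, and then Cartwright's theorem (Theorem \ref{C}) directly furnishes the canonical product representation of $\Phi$ over its zeros, all of which lie in the closed lower half-plane. From that representation the absence of a singular part is transparent, and the linear exponent is pinned to $-i\sigma_{F}^{+}z$ by computing $\lim_{y\to\infty}\ln|\Phi(iy)|/y$ with the indicator calculus of Lemma \ref{36}. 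The two remaining technical points --- convergence of the Blaschke product and the control of the arc term in Carleman's formula --- both rest essentially on the exponential-type hypothesis but are routine.
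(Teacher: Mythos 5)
The paper gives no internal proof of Theorem \ref{NL}: it simply refers to Levin's book, and your outline is in fact the classical argument behind that citation (Carleman's formula to obtain the Blaschke condition, removal of the upper half-plane zeros by the Blaschke product, Poisson/Herglotz representation of the zero-free factor). Up to the representation step the plan is sound, so the issue is not the route but whether you actually complete it.

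You do not: the step you yourself identify as the crux, the elimination of the singular measure $\mu$ (equivalently, of a singular inner factor of $\Phi=FB$), is left unproved, and your claim that for a general holomorphic $F$ this ``would require an additional hypothesis'' is incorrect. The hypothesis already present --- $F$ holomorphic on the closed half-plane $\Im z\geq 0$, hence analytic across the real axis --- is exactly what kills $\mu$. Indeed, a positive singular part would make its Poisson extension, and hence $|F|$, tend to $+\infty$ non-tangentially at $\mu$-a.e.\ point of its support, contradicting local boundedness of the analytic function $F$ near such boundary points; a negative singular part would force $|F(t_{0}+iy)|\to 0$ as $y\to 0$ at a.e.\ point $t_{0}$ of its support, so it must be carried by the discrete real zero set of $F$, i.e.\ consist of atoms, and an atom of mass $c>0$ at $t_{0}$ gives $|F(t_{0}+iy)|\lesssim e^{-c/(\pi y)}$, contradicting the fact that $F$ vanishes at $t_{0}$ only to finite order. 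Supplying this argument (or the limiting argument in Levin's proof, passing from $F(\cdot+i\varepsilon)$ to the boundary and checking that no mass escapes into a singular component) is precisely what finishes the theorem. The fallback you propose instead is weaker than claimed: Cartwright's product representation in Theorem \ref{C} does not ``transparently'' yield the Poisson-integral formula of parts 1--2, since converting the canonical product over the zeros into $\frac{y}{\pi}\int_{-\infty}^{\infty}\ln|F(t)|\,\big((t-x)^{2}+y^{2}\big)^{-1}dt+\sigma^{+}_{F}y$ requires essentially the same boundary-measure analysis you are skipping; as written, your argument is therefore incomplete, and nearly circular, at its decisive point.
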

\begin{proof}
We refer the proof to \cite[p.\,240]{Levin}. 
\end{proof}


\begin{thebibliography}{widest-label}

\bibitem{Boas}R. P. Boas, Entire Functions, Academic Press, New York, 1954.
\bibitem{Dya}S. Dyatlov and M. Zworski, Mathematical Theory of scattering resonances, V.1.0. 
http:/\!\!/math.mit.edu/$\sim$dyatlov/res/.
\bibitem{DT79}P. Deift and E. Trubowitz, Inverse scattering on the line, Commun. Pure Appl. Math., 32, 121--251 (1979). 

\bibitem{Fa64}L. D. Faddeev,   Properties of the S-matrix of the one-dimensional Schr\"{o}dinger equation, Trudy Mat. Inst. Steklova, 73, 314--336 (1964). 
.

\bibitem{Korotyaev1}E. Korotyaev, Inverse resonance scattering for Schr\"{o}dinger operator on the half line, Asymptot. Anal.,
37, 215--226 (2004).
\bibitem{Korotyaev3}E. Korotyaev, Inverse resonance scattering on the real line, Inverse Probl., 21, 325--341(2005).


\bibitem{Levin}B. Ja. Levin, Distribution of Zeros of Entire
Functions, revised edition, Translations of Mathematical
Monographs, American Mathematical Society, 1972.
\bibitem{Levin2}B. Ja. Levin, Lectures on Entire Functions,
Translation of Mathematical Monographs, V. 150, AMS, Providence, 1996.


\bibitem{Mellin}A. Melin, Operator methods for inverse scattering on the real line, Communications in Partial Differential Equations, Vol, 10,  677--766 (1985).


\bibitem{Tang} S. -H. Tang and M. Zworski, Potential Scattering on the Real Line, ''{https:/\!\!/}{math.}{berkeley.} {edu/}{$\sim$}{zworski/tz1.pdf}.''
\bibitem{Zworski}M. Zworski, Distribution of poles for scattering on the real line, Journal of Functional Analysis,
Vol. 73, 2, 277--296 (1987).
\bibitem{Zworski2}M. Zworski, A remark on isopolar potentials, SIAM J. Math. Anal. 32, 1324--1326 (2001).
\end{thebibliography}
\end{document}